\documentclass[12pt, reqno]{amsart}

\usepackage{amsmath,amssymb,amsthm,amsfonts,verbatim}
\usepackage{microtype}
\usepackage[all,2cell]{xy}
\usepackage{mathtools}
\usepackage{graphicx}
\usepackage{pinlabel}
\usepackage{hyperref}
\usepackage{mathrsfs}
\usepackage{color}
\usepackage[dvipsnames]{xcolor}
\usepackage{enumerate}
\usepackage{cite}
\usepackage{soul}
\usepackage{palatino}
\usepackage{wrapfig}


\CompileMatrices

\usepackage[top=1.2in,bottom=1.2in,left=1in,right=1in]{geometry}

\usepackage{hyperref} 
\hypersetup{          
	colorlinks=true, breaklinks, linkcolor=[RGB]{51 102 204}, filecolor=Orchid, urlcolor=[RGB]{51 102 204},
	citecolor=Orchid, linktoc=all, }
\usepackage[nameinlink]{cleveref}

\theoremstyle{plain}
\newtheorem{theorem}{Theorem}[section]

\newtheorem{proposition}[theorem]{Proposition}
\newtheorem{lemma}[theorem]{Lemma}

\newtheorem{corollary}[theorem]{Corollary}

\theoremstyle{definition}
\newtheorem{definition}[theorem]{Definition}
\newtheorem{example}[theorem]{Example}

\newtheorem{remark}[theorem]{Remark}


\newcommand{\nc}{\newcommand}
\nc{\dmo}{\DeclareMathOperator}

\nc{\Q}{\mathbb{Q}}
\nc{\F}{\mathbb{F}}
\nc{\R}{\mathbb{R}}
\nc{\Z}{\mathbb{Z}}
\nc{\C}{\mathbb{C}}
\nc{\cN}{\mathcal{N}}
\nc{\N}{\mathbb{N}}
\nc{\Ell}{\mathcal{L}}
\nc{\cM}{\mathcal{M}}
\nc{\K}{\mathcal{K}}
\nc{\I}{\mathcal{I}}
\nc{\cP}{\mathcal{P}}
\nc{\cS}{\mathcal{S}}
\nc{\cT}{\mathcal T}
\nc{\U}{\mathcal U}
\nc{\disk}{\mathbb{D}}
\nc{\hyp}{\mathbb{H}}

\nc{\CP}{\mathbb{CP}}
\nc{\RP}{\mathbb{RP}}
\dmo{\Mod}{Mod}
\dmo{\PMod}{PMod}
\dmo{\LMod}{LMod}
\dmo{\Diff}{Diff}
\dmo{\Homeo}{Homeo}
\dmo{\dist}{dist}
\dmo\BDiff{BDiff}
\dmo\SO{SO}
\dmo\Hom{Hom}
\dmo\SL{SL}
\dmo\rank{rank}
\dmo\sig{sig}
\dmo\Out{Out}
\dmo\Aut{Aut}
\dmo\Inn{Inn}
\dmo\GL{GL}
\dmo\PGL{PGL}
\dmo\Gr{Gr}
\dmo\PSL{PSL}
\dmo\BHomeo{BHomeo}
\dmo\EHomeo{EHomeo}
\dmo\EDiff{EDiff}
\dmo\Disc{Disc}
\dmo\Aff{Aff}

\dmo\Teich{Teich}
\dmo\Fix{Fix}
\nc{\pair}[1]{\ensuremath{\left\langle #1 \right\rangle}}
\nc{\abs}[1]{\ensuremath{\left| #1 \right|}}
\nc{\action}{\circlearrowright}
\nc{\norm}[1]{\left | \left | #1 \right | \right |}
\nc{\abcd}[4]{\ensuremath{\left(\begin{array}{cc} #1 & #2 \\ #3 & #4 \end{array}\right)}}
\nc{\into}\hookrightarrow
\dmo{\Isom}{Isom}
\nc{\normal}{\vartriangleleft}
\dmo{\Vol}{Vol}
\dmo{\im}{Im}
\dmo{\Push}{Push}
\dmo{\Conf}{Conf}
\dmo{\PConf}{PConf}
\dmo{\PB}{PB}
\dmo{\id}{id}
\dmo{\Jac}{Jac}
\dmo{\Pic}{Pic}
\dmo{\Stab}{Stab}
\dmo{\Arf}{Arf}
\dmo{\End}{End}
\dmo{\Gal}{Gal}
\dmo{\lcm}{lcm}
\dmo{\ab}{ab}
\dmo{\opp}{op}
\dmo{\SU}{SU}
\dmo{\OT}{\Omega \mathcal{T}}
\dmo{\OM}{\Omega \mathcal{M}}
\dmo{\PH}{\mathbb{P}\mathcal{H}}
\dmo{\spin}{spin}
\dmo{\even}{even}
\dmo{\odd}{odd}
\dmo{\comp}{\mathcal{H}}
\dmo{\Mgk}{\mathcal{M}_{g, \underline{\kappa}}}
\dmo{\orb}{orb}
\dmo{\AJ}{AJ}
\dmo{\Ck}{\mathsf{C}(\underline{\kappa})}
\dmo{\Int}{Int}
\dmo{\pr}{pr}
\dmo{\lab}{lab}
\dmo{\Sym}{Sym}
\dmo{\Ann}{Ann}
\dmo{\Rad}{Rad}
\dmo{\Ind}{Ind}
\dmo{\Div}{Div}
\dmo{\Res}{Res}
\dmo{\Hur}{Hur}
\dmo{\vcd}{vcd}

\nc{\Span}[1]{\operatorname{Span}(#1)}

\renewcommand{\epsilon}{\varepsilon}

\renewcommand{\le}{\leqslant}
\nc{\coloneq}{\mathrel{\mathop:}\mkern-1.2mu=}
\nc{\margin}[1]{\marginpar{\scriptsize #1}}
\nc{\para}[1]{\medskip\noindent\textbf{#1.}}
\definecolor{myblue}{RGB}{102,153, 255}
\definecolor{myred}{RGB}{204,0,0}
\definecolor{mygreen}{RGB}{0,204,0}
\definecolor{myorange}{RGB}{255,102,0}
\definecolor{mypurple}{RGB}{138,43,226}
\nc{\red}[1]{\textcolor{myred}{#1}}
\nc{\blue}[1]{\textcolor{myblue}{#1}}

\nc{\Sk}{\mathcal S_{\kappa}}
\nc{\Pk}{\mathcal P_{\kappa}}

\nc{\lb}{[}
\nc{\rb}{]}

\title{Ropes, fractions, and moduli spaces}

\author{Nick Salter}
\email{nsalter@nd.edu}
\thanks{The author is supported by NSF Award No. DMS-2153879.}
\address{Department of Mathematics, University of Notre Dame, Hurley Hall, Notre Dame, IN 46556}
\date{September 22, 2023}

\begin{document}
\maketitle

\begin{abstract}This is an exposition of John H. Conway's {\em tangle trick}. We discuss {\em what} the trick is, {\em how} to perform it, {\em why} it works mathematically, and finally offer a conceptual explanation for why a trick like this should exist in the first place. The mathematical centerpiece is the relationship between braids on three strands and elliptic curves, and we a draw a line from the tangle trick back to work of Weierstrass, Abel, and Jacobi in the 19th century. For the most part we assume only a familiarity with the language of group actions, but some prior exposure to the fundamental group is beneficial in places.
\end{abstract}

\section{What is the tangle trick?} I learned of the tangle trick from my graduate school office mate Tim Black, who learned it from Conway himself at the Canada-USA Mathcamp. Here is what you would see watching a performance of the trick.

\begin{wrapfigure}{l}{0.18\textwidth}
\centering
\labellist
\small
\pinlabel $1$ at 12 12
\pinlabel $2$ at 103 12
\pinlabel $3$ at 103 103
\pinlabel $4$ at 12 103
\endlabellist
\includegraphics[scale = 0.7]{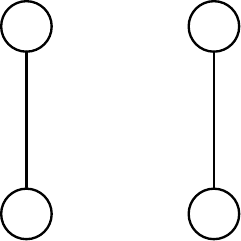}
\end{wrapfigure} 
A {\sc magician} and their {\sc assistant} take the stage with two ropes of equal length. They recruit five {\sc volunteers} from the audience. Four of the {\sc volunteers} they position at the corners of a square, handing them the ropes as shown at left. The fifth {\sc volunteer} is the {\sc caller}, and is positioned off to the side. The {\sc caller} is tasked with calling out one of two {\sc moves} for the {\sc volunteers} to perform: {\bf T}wist and tu{\bf R}n.

To perform a {\bf T}wist, the {\sc volunteers} standing in positions 1 and 2 exchange their ropes, with 1 passing their end {\em over}, and 2 passing their rope {\em under}. To perform a tu{\bf R}n, the four {\sc volunteers} rotate positions counterclockwise along the vertices of their square.
\begin{figure}[htbp]
\centering
\labellist
\small
\pinlabel $T$ at 155 65
\pinlabel $b$ at 58 60
\pinlabel $b$ at 254 60
\endlabellist
\includegraphics[scale = 0.7]{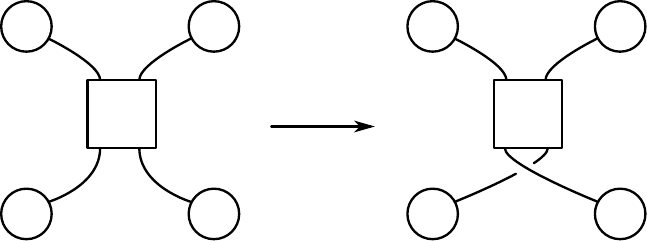}
\caption{The {\bf T}wist move. The region enclosed in the square and labeled ``b'' indicates an arbitrary tangle contained within.}
\label{figure:twist}
\end{figure}

\begin{figure}[htbp]
\centering
\labellist
\small
\pinlabel $R$ at 155 65
\pinlabel $b$ at 58 60
\pinlabel \rotatebox[origin=c]{90}{$b$} at 252 58
\endlabellist
\includegraphics[scale = 0.7]{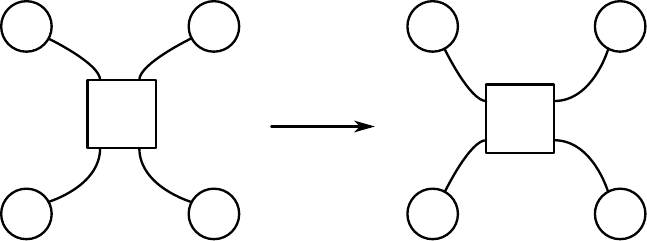}
\caption{The tu{\bf R}n move.}
\label{figure:turn}
\end{figure}

The trick now proceeds as follows. The {\sc magician} exits the room after instructing the {\sc caller} to call out a sequence of {\bf T}wists and tu{\bf R}ns of their choosing, until the {\sc assistant} says to stop. As the moves are performed, the pair of ropes becomes increasingly tangled. When the {\sc assistant} calls stop, they {\em wrap} the tangled middle portion of the ropes in a bag. The {\sc magician} is then summoned back into the room, and the {\sc assistant} tells them {\em a single rational number}.

Now the {\sc magician} takes over, {\em calling out a further sequence of {\bf T}wists and tu{\bf R}ns} (in the same directions as before!). Finally, the big reveal: the {\sc magician} removes the bag, and after a good shake, {\em the ropes have become untangled}. Magic!

\para{Variants} It seems that Conway performed the trick in various ways. Curt McMullen reports on a performance at Berkeley, where a pre-constructed tangle already wrapped in a bag appeared on stage along with its rational number. Conway then solicited untangling suggestions from the audience, updating the fraction until the invariant reached zero, at which point the bag was removed and the tangle was resolved.

\section{How do you perform the trick?}
Let us abbreviate a {\bf T}wist by T and a tu{\bf R}n by R. The method is to associate to each state of the ropes a {\em rational number} in such a way that each T or R alters this number in a predictable fashion. In fact, this ``tangle invariant'' will take values in $\Q \cup \{\infty\}$. 

Assign the initial, uncrossed state the invariant $x = 0$. Subsequently, apply the following rules: a T sends $x$ to $x + 1$, and an $R$ sends $x$ to $-1/x$. We adopt the following conventions for arithmetic with $\infty$: we define $-1/0 = \infty$, $-1/\infty = 0$, and $\infty + 1 = \infty$. 

As the {\sc caller} is calling their sequence of T's and R's, the {\sc assistant} is silently tracking the value of the tangle invariant (this can be very challenging in practice!). The number they present to the {\sc magician} upon their return is the invariant of the present state. 

To untangle the ropes, the {\sc magician} is calling a sequence of T's and R's so as to take the tangle invariant back to zero. The general method is to {\em reduce the denominator of $x$} by a sequence of moves $T^kR$ (written so as to act on $x$ from the left, so that the $R$ is performed {\em first}). This amounts to the Euclidean algorithm, as we can see in the following example.

\begin{example}\label{example}
Suppose the {\sc magician} returns to the room and is presented with the number $146/57$. The untangling begins as shown:
\[
\frac{146}{57} \xrightarrow{R} \frac{-57}{146} \xrightarrow{T} \frac{89}{146} \xrightarrow{R} \frac{-146}{89} \xrightarrow{T} \frac{-57}{89}.
\]
Thus from the second state to the fifth, the fraction reduces from $\frac{-57}{146}$ to $\frac{-57}{89}$. While the fraction {\em originally} had a smaller denominator of $57$, this was something of a red herring, since a positive value of $x$ will have to be resolved by first performing $R$, so that $T$ will then {\em decrease} the absolute value of the numerator. We continue:
\[
\frac{-57}{89} \xrightarrow{T} \frac{32}{89} \xrightarrow{R} \frac{-89}{32} \xrightarrow{T} \frac{-57}{32} \xrightarrow{T} \frac{-25}{32} \xrightarrow{T} \frac{7}{32}.
\]
At this point, the methodology is hopefully clear: given a fraction $\frac{p}{q}$ with $0 < p < q$, an $R$ will convert this to $\frac{-q}{p}$. Then apply $k$ $T$'s, for $k$ such that $0 \le kp - q < p$, and repeat. In our example, we finish as follows:
\[
\frac{7}{32} \xrightarrow{R} \frac{-32}{7} \xrightarrow{T^5} \frac{3}{7} \xrightarrow{R} \frac{-7}{3} \xrightarrow{T^3} \frac{2}{3} \xrightarrow{R} \frac{-3}{2} \xrightarrow{T^2} \frac{1}{2} \xrightarrow{R} -2 \xrightarrow{T^2} 0.
\]
\end{example}
 
\section{Why does the trick work?}
Perhaps you've noticed the crucial unstated assumption underlying the method above: {\em any tangle with invariant $0$ must be the initial untangled state}. The purpose of this section is to explain why this is the case. Let us introduce some terminology: a {\em rational tangle} is any ``legal'' state of the ropes, i.e. any configuration obtained by starting with the {\em untangle} (the initial configuration) and performing $T$ and $R$ moves. This notion was introduced in Conway's paper \cite{conway}. A rational tangle has a {\em tangle invariant} valued in $\Q\cup \{\infty\}$ which transforms as discussed above under the moves $T$ and $R$. The claim that the trick works then amounts to the following statement:

\begin{proposition}\label{untangle0}
    If a rational tangle has invariant $0$, then it is the untangle.
\end{proposition}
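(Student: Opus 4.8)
The plan is to pass to the double branched cover, where the two moves become linear. The operations $T\colon x\mapsto x+1$ and $R\colon x\mapsto -1/x$ are the M\"obius transformations attached to the matrices $\left(\begin{smallmatrix}1&1\\0&1\end{smallmatrix}\right)$ and $\left(\begin{smallmatrix}0&-1\\1&0\end{smallmatrix}\right)$, and these generate $\SL_2(\Z)$ acting on $\Q\cup\{\infty\}=\P^1(\Q)$; the first thing to observe is that this is precisely the $\SL_2(\Z)$--action on the set of \emph{slopes} of a torus. That torus, call it $E$, is the double cover of the sphere $\partial B^3$ branched over the four tangle endpoints (a ``pillowcase''), and remembering the branch points as the $2$--torsion turns $E$ into an elliptic curve --- this is the point of contact with the modular story advertised in the introduction. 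So the strategy is: build a manifestly isotopy-invariant slope out of a rational tangle, show it transforms like the tangle invariant and hence equals it, and then read off that a tangle of slope $0$ must be the untangle.

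First I would show, by induction on the length of a word in $T,R$ producing a rational tangle $\beta$, that the double cover $\Sigma_2(B^3,\beta)$ of the ball branched over $\beta$ is a solid torus with $\partial\,\Sigma_2(B^3,\beta)=E$. The base case is the untangle, whose branched cover is $(\text{annulus})\times I\cong S^1\times D^2$. For the inductive step, $R$ reglues by a finite-order symmetry, while $T$ alters $\beta$ inside a ball by a half-twist, which upstairs is a Dehn twist along an annulus, and Dehn-twisting a solid torus gives a solid torus. A solid torus with boundary identified with $E$ is determined rel $E$ by a single datum, the slope $\mu(\beta)\in\P^1(\Q)$ of its meridian. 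I would then normalize the identification of slopes with $\Q\cup\{\infty\}$ using the untangle, and check that the lifts of $T$ and $R$ to $E$ act by the two matrices above; it follows that $\mu(\beta)$ and the tangle invariant transform identically and agree on the untangle, so they coincide. In particular the tangle invariant is a bona fide isotopy invariant, and ``invariant $0$'' means exactly that $\Sigma_2(B^3,\beta)$ is a solid torus with the same meridian slope as $\Sigma_2(B^3,\text{untangle})$.

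To finish I would descend. Two solid tori sharing the boundary torus $E$ and the meridian slope are homeomorphic by a map that is the identity on $E$; after an isotopy one can take this homeomorphism to commute with the two deck involutions (each a standard, essentially unique, involution of a solid torus with $1$--dimensional fixed set), so it descends to a homeomorphism $h\colon(B^3,\beta)\to(B^3,\text{untangle})$ that is the identity on $\partial B^3$ --- in particular fixing the four endpoints, which lie on $\partial B^3$. By the Alexander trick $h$ is isotopic rel $\partial B^3$ to the identity, and dragging $\beta$ along this isotopy exhibits $\beta$ as isotopic, rel its endpoints, to the untangle.

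The main obstacle is the descent: arranging the solid-torus homeomorphism to be equivariant for the deck involutions, i.e.\ invoking the classification of such involutions of the solid torus, so that it genuinely descends to a homeomorphism of the pair $(B^3,\beta)$. Running a close second is the inductive bookkeeping --- verifying both that each move keeps the branched cover a solid torus and that the meridian slope transforms by \emph{exactly} $\left(\begin{smallmatrix}1&1\\0&1\end{smallmatrix}\right)$ and $\left(\begin{smallmatrix}0&-1\\1&0\end{smallmatrix}\right)$ --- since that is where the bare combinatorics of $T$ and $R$ gets welded to the $\SL_2(\Z)$--action that makes the trick run. (A purely combinatorial reformulation, tracking how words in $T,R$ act on the essential arcs of the four-times-punctured sphere, is also possible, but it seems harder to make self-contained than the branched-cover argument.)
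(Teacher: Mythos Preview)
Your approach is sound and is essentially the classical branched-cover proof of Conway's theorem, but it is genuinely different from the paper's argument. The paper works entirely at the level of group actions: it sets up the free group $\Gamma = \langle R,T\rangle$ acting both on the set $\cT$ of rational tangles and on $\Q\cup\{\infty\}$ (via the surjection $\phi\colon\Gamma\to\PSL_2(\Z)$), and then shows the containment $\Stab_\Gamma^2(0)\le\Stab_\Gamma^1(\tau_0)$ directly. Concretely, $\Stab_\Gamma^2(0)$ is generated by a lift of the cyclic generator $trt$ of $\Stab_{\PSL_2(\Z)}(0)$ together with $\Gamma$-conjugates of the relators $R^2$ and $(TR)^3$; the paper checks by picture that $TRT$ fixes the untangle, and that $R^2$ and $(TR)^3$ act trivially on \emph{every} rational tangle, the former being an immediate consequence of a preliminary lemma establishing ``$bdpq$ symmetry'' of rational tangles.

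The contrast is instructive. The paper's route is deliberately elementary --- no $3$-manifold topology, no equivariance arguments, just the presentation of $\PSL_2(\Z)$ and a handful of tangle pictures --- and it proves \emph{only} the statement at hand, explicitly declining (see \Cref{remark:notWD}) to show that the tangle invariant is well-defined as a function of the tangle rather than of the word. Your branched-cover argument buys strictly more: by identifying the invariant with the meridian slope $\mu(\beta)$ of the solid torus $\Sigma_2(B^3,\beta)$, you get well-definedness for free, and indeed the full classification of rational tangles by $\Q\cup\{\infty\}$. The cost is the machinery you flag yourself: the equivariant descent step leans on the classification of involutions of the solid torus with one-dimensional fixed set, and the inductive check that $T$ and $R$ lift to the correct matrices on $H_1(E)$ requires care with orientations. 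Both are standard, but neither is as lightweight as the paper's picture-checks.
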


\begin{remark}\label{remark:notWD}
    There is a crucial subtlety about the tangle invariant: {\em we do not know it is well-defined}. That is, it is not clear that if one starts with the untangle and then performs two separate sets of moves, obtaining the same tangle both ways, that the invariants assigned to the end results agree. The tangle invariant as we have introduced it here is in fact an invariant of the {\em sequence of moves} used to produce the current state. {\em Nevertheless}, it is possible to prove that the trick works based just on this weaker fact! 

    It would take us a bit far afield from the main axis of our discussion to prove the well-definedness of the tangle invariant. A proof based on the Jones polynomial appears in \cite{kauffman}.
\end{remark}

The proof will be based around a study of group actions. Ultimately we will consider {\em three} group actions, but \Cref{untangle0} will only require a discussion of the first two.

\para{Group action 1: twists and turns} Let us define the group $\Gamma$ to be the free group on the letters $R$ and $T$. Then $\Gamma$ acts on the set $\cT$ of rational tangles via the procedure discussed above and shown in \Cref{figure:twist,figure:turn}. The action of $\Gamma$ on $\cT$ is certainly not {\em faithful}\footnote{Recall that an action of $G$ on $S$ is {\em faithful} if every nonidentity element $g \in G$ acts nontrivially. Equivalently, under the description of the group action as a homomorphism $G \to \Aut(S)$ (where $\Aut(S)$ denotes the group of permutations of $S$), a faithful action is one for which this map is {\em injective}.}, since, e.g. $R^4$ obviously acts trivially. There are in fact more such ``relations'' (combinations of $R, T$ that leave all rational tangles invariant). Understanding what these are will be a central question going forward - see \Cref{remark:matricesact}.

\para{Group action 2: M\"obius transformations} The group $\SL_2(\Z)$ of $2\times 2$ matrices with integer entries and unit determinant acts on the set $\Q\cup \{\infty\}$ by so-called {\em M\"obius transformations} as follows:
\[
\begin{pmatrix}
a & b \\ c & d
\end{pmatrix}
\cdot x = \frac{ax + b}{cx + d}.
\]
In fact, since $-I$ acts trivially, this descends to an action of $\PSL_2(\Z) := \SL_2(\Z) / \pair{-I}$, which is easily seen to be faithful.

In this action, the transformation $x \mapsto x + 1$ then corresponds to the action of the matrix $t=\begin{pmatrix}1&1\\0&1\end{pmatrix}$, and the transformation $x \mapsto \frac{-1}{x}$ corresponds to $r = \begin{pmatrix}
    0 & -1\\1&0
\end{pmatrix}$. 
Since $\Gamma$ (as defined in Group Action 1) is {\em free}, there is a homomorphism $\phi: \Gamma \to \PSL_2(\Z)$ given by sending $R$ to $r$ and $T$ to $t$, and hence we can regard this second action as likewise being by the group $\Gamma$. While essentially trivial, this remark will turn out to be structurally illuminating, since it will allow us to regard each of the sets $\cT$ and $\Q\cup\{\infty\}$ as carrying actions of the same group $\Gamma$.

\subsection{A little about $\PSL_2(\Z)$}
It turns out that $\phi: \Gamma \to \PSL_2(\Z)$ is a {\em surjection}, and in fact, there is a simple presentation for $\PSL_2(\Z)$ with respect to this generating set which we will use in the ensuing discussion.

\begin{theorem}\label{theorem:psl2z}
    $\PSL_2(\Z)$ is generated by $r$ and $t$, and with respect to this generating set, there is a presentation
    \[
    \PSL_2(\Z) \cong \pair{r,t \mid r^2 = (tr)^3 = id}.
    \]
\end{theorem}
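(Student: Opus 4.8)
The plan is to run the classical three-step argument: first show that $r$ and $t$ generate $\PSL_2(\Z)$; then check that the two proposed relations genuinely hold, which yields a surjection from the abstract group onto $\PSL_2(\Z)$; and finally---the real content---show that this surjection is injective, so that no further relations are needed. Generation is just the Euclidean algorithm in matrix form. Given $g = \abcd{a}{b}{c}{d} \in \SL_2(\Z)$, left multiplication by $t^n = \abcd{1}{n}{0}{1}$ replaces the top row $(a,b)$ by $(a+nc,\,b+nd)$, and left multiplication by $r$ interchanges the two rows up to sign. So when $c \neq 0$ we may first reduce $|a|$ below $|c|$ with a suitable power of $t$ and then apply $r$, strictly decreasing $|c|$; iterating, we reach $c = 0$, where necessarily $g = \pm t^{b}$, and since $-I = r^2$ this shows $g \in \pair{r,t}$. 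Passing to the quotient, $r$ and $t$ generate $\PSL_2(\Z)$.

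For the relations, one computes $r^2 = -I$ and, setting $s \coloneq tr = \abcd{1}{-1}{1}{0}$, also $s^3 = -I$; both are trivial in $\PSL_2(\Z)$, while neither $s$ nor $s^2$ equals $\pm I$, so $s$ has order exactly $3$ there (and $r$ order exactly $2$). Hence there is a surjection $\Phi$ from $G \coloneq \pair{r,t \mid r^2 = (tr)^3 = id}$ onto $\PSL_2(\Z)$. Using the relation $r^2 = id$ to write $t = sr$, one has $tr = s$, so the presentation of $G$ may be rewritten (by Tietze moves) as $\pair{r, s \mid r^2 = s^3 = id}$; that is, $G$ is the free product $(\Z/2\Z) * (\Z/3\Z)$.

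It remains to show $\Phi$ is injective, i.e.\ that $\PSL_2(\Z) = \pair{r,s}$ really is the free product $\pair{r} * \pair{s}$. This is where a genuinely new ingredient enters: the \emph{ping-pong lemma}, applied to the M\"obius action on $\R \cup \{\infty\}$. The three points $0, 1, \infty$ cut that circle into three open arcs $(0,1)$, $(1,\infty)$, $(-\infty,0)$, and a short computation shows that $s \colon x \mapsto 1 - \tfrac{1}{x}$ permutes these arcs in a $3$-cycle; since the positive reals $X_s \coloneq (0,\infty)$ are the union of two of them and the negative reals $X_r \coloneq (-\infty,0)$ are the third, it follows that both $s$ and $s^2$ carry $X_r$ into $X_s$, while $r \colon x \mapsto -\tfrac{1}{x}$ carries $X_s$ onto $X_r$. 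Since $X_r$ and $X_s$ are nonempty and disjoint and $\pair{s}$ has order $\geq 3$, the ping-pong lemma gives $\pair{r,s} \cong \pair{r} * \pair{s}$; combined with the generation step, this forces $\Phi$ to be an isomorphism.

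The one real obstacle is this last step: one has to recognize that $\PSL_2(\Z)$ is a free product in the first place, and then find ping-pong regions that witness it. (It is a pleasant coincidence with the theme of the paper that these regions $(0,\infty)$ and $(-\infty,0)$, together with the ``digits'' $T^k R$, are exactly the data manipulated by the Euclidean untangling recipe of \Cref{example}; indeed this step can equally be recast as a continued-fraction normal-form argument.) Everything else is routine verification.
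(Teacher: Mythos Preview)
Your argument is correct. The paper itself does not give a self-contained proof: it gestures at the fundamental-domain method on $\hyp$ and otherwise defers to Alperin's short note \cite{alperin}. Your ping-pong argument on the M\"obius action on $\R\cup\{\infty\}$ is essentially Alperin's proof recast in standard ping-pong language (Alperin works with irrational reals and tracks signs under alternating applications of $r$ and powers of $s$, which is the same mechanism as your sets $X_r=(-\infty,0)$ and $X_s=(0,\infty)$). So you have fleshed out one of the two routes the paper points to; the alternative---reading off the presentation from a fundamental domain for $\PSL_2(\Z)\curvearrowright\hyp$---is more geometric but requires more setup, which is presumably why the paper only mentions it conceptually. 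Your Euclidean-algorithm generation step and Tietze rewriting to $\Z/2*\Z/3$ are exactly what is needed to set up either approach.
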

    \begin{proof}
        Conceptually, one can analyze the action of $\PSL_2(\Z)$ on the {\em upper half-plane} by the same M\"obius action as in Group Action 2, using the principle that a presentation can be extracted from information encoded in a fundamental domain (we will discuss this action further in \Cref{section:EC}).  A short and slick proof is given in \cite{alperin}. 
    \end{proof}

\subsection{Symmetries} Rational tangles have some symmetries that will be essential to understand for the proof of \Cref{untangle0}.

\begin{definition}[$bdpq$ symmetry]
    A tangle $\tau$ has {\em $bdpq$ symmetry} if it is invariant under rotation through horizontal and vertical axes as shown in \Cref{figure:bdpq}. 
\end{definition}

\begin{figure}[htbp]
\centering
\labellist
\tiny
\pinlabel $b$ at 40 163
\pinlabel $b$ at 40 42
\pinlabel $d$ at 209 163
\pinlabel $p$ at 198 49
\endlabellist
\includegraphics[scale = 0.7]{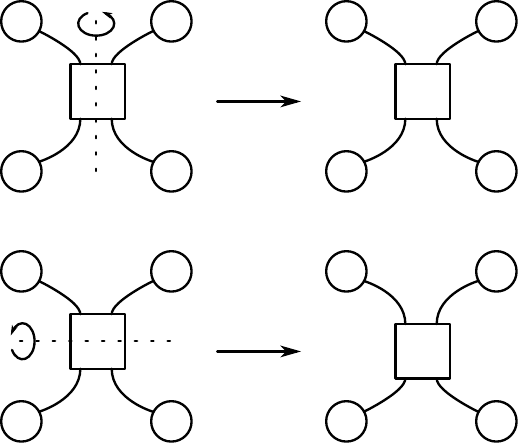}
\caption{$bdpq$ symmetry.}
\label{figure:bdpq}
\end{figure}

Of course, the terminology ``$bdpq$ symmetry'' is ad hoc; it is more conventional to discuss ``$\Z/2 \times \Z/2$ symmetry'', or, somewhat more classically, ``Klein 4-group symmetry''. But I feel that the term $bdpq$ symmetry is usefully evocative in this setting. 

\begin{lemma}\label{lemma:symmetric}
    Every rational tangle has $bdpq$ symmetry.
\end{lemma}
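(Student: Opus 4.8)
The plan is to induct on the number of $T$ and $R$ moves needed to build the tangle. Write $h$ and $v$ for the two symmetries appearing in the definition of $bdpq$ symmetry — the $180^\circ$ rotations of the tangle box through the horizontal and through the vertical axis — and observe that together with the identity and the product $hv$ (the $180^\circ$ rotation in the plane of the diagram, which is precisely the homeomorphism underlying $R^2$) these four homeomorphisms form a copy of the Klein four-group $K = \{1, h, v, hv\}$ acting on $\cT$. Thus $\tau$ has $bdpq$ symmetry exactly when $\tau$ is fixed by all of $K$, and since $h$ and $v$ generate $K$ it suffices at each stage to check invariance under $h$ and under $v$. The base case is the untangle, which is manifestly fixed by $h$ and $v$; the inductive step has a turn case and a twist case.

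For the turn move the point is purely formal: $R$, being a $90^\circ$ rotation of the box, normalizes $K$ — conjugation by $R$ interchanges $h$ and $v$ and fixes $hv$. Hence if $\tau$ is $K$-invariant and $g \in K$, then $g\cdot(R\tau) = R\cdot(R^{-1}gR)\cdot\tau = R\cdot(g'\tau)$ with $g' = R^{-1}gR \in K$, and $g'\tau \cong \tau$ gives $g\cdot(R\tau) \cong R\tau$. So $R\tau$ is again $K$-invariant.

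For the twist move I want to show $T\tau$ is fixed by $h$ and by $v$, granting that $\tau$ is. Invariance under $v$ should be the easy half: $v$ carries the bottom edge of the box to itself (swapping the endpoints in positions $1$ and $2$), a half-twist there commutes up to isotopy with $v$, and $v\tau \cong \tau$, so $v(T\tau) \cong T\tau$. Invariance under $h$ is the crux. Since $h$ carries the bottom edge to the top edge, $h(T\tau)$ is $\tau$ (flipped, hence isotopic to $\tau$) with a half-twist now adjoined along the \emph{top} edge rather than the bottom; so the claim reduces to the assertion that, for a symmetric tangle $\tau$, adjoining a half-twist along the bottom of the box gives the same tangle as adjoining one along the top. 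The plan is to prove this by a flype: the single crossing wedged between $\tau$ and the bottom edge can be dragged all the way around $\tau$ to sit between $\tau$ and the top edge, at the cost of replacing $\tau$ by its horizontal flip $h\tau$ — and $h\tau \cong \tau$ by the inductive hypothesis, so nothing is lost. This flype is the only place the symmetry of $\tau$ is genuinely used, and it is exactly the mechanism making the lemma true.

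The main obstacle is this flype step. One must check that the flype is an honest ambient isotopy of the tangle \emph{rel} the four boundary endpoints — i.e., that the crossing really can be slid entirely around $\tau$ and not just locally — and one must track the handedness of the half-twist through the flip $h$ and through the conjugations by $R$ and by $v$ so that the various $\pm$ choices are consistent; the organizing principle is that each of $h$, $v$, $R^2$ reverses the planar diagram and the over/under data an even number of times, hence preserves the type of a half-twist. The base case, the turn step, and the $v$-invariance in the twist step are routine and are best presented pictorially.
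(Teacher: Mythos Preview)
Your proposal is correct and follows essentially the same route as the paper: induction on the number of moves, the $R$ step via the observation that the Klein four-group is normal in $D_8$ (your ``$R$ normalizes $K$'' is exactly the paper's normal-subgroup principle), and the $T$ step handled as a direct check for one axis and a flype exploiting the inductive hypothesis for the other. The paper presents the $T$ step pictorially rather than in words, but the content --- including the flype as the crux --- is the same.
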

\begin{proof}
    We proceed by induction on the number $n$ of $T$ and $R$ moves used to take the untangle $\tau_0$ to $\tau$. For the base case $n = 0$, note that $\tau_0$ certainly has $bdpq$ symmetry. 

    Now suppose that $\tau$ is a rational tangle with $bdpq$ symmetry; we seek to show that $T \cdot \tau$ and $R \cdot \tau$ also have $bdpq$ symmetry. 

\begin{figure}[htbp]
\centering
\labellist
\pinlabel $\sim$ at 260 165
\pinlabel $\sim$ at 260 45
\pinlabel $\sim$ at 382 45
\tiny
\pinlabel $b$ at 40 163
\pinlabel $b$ at 40 42
\pinlabel $b$ at 315 163
\pinlabel $d$ at 209 163
\pinlabel $p$ at 198 49
\pinlabel $q$ at 329 55
\pinlabel $b$ at 435 42
\endlabellist
\includegraphics[scale = 0.7]{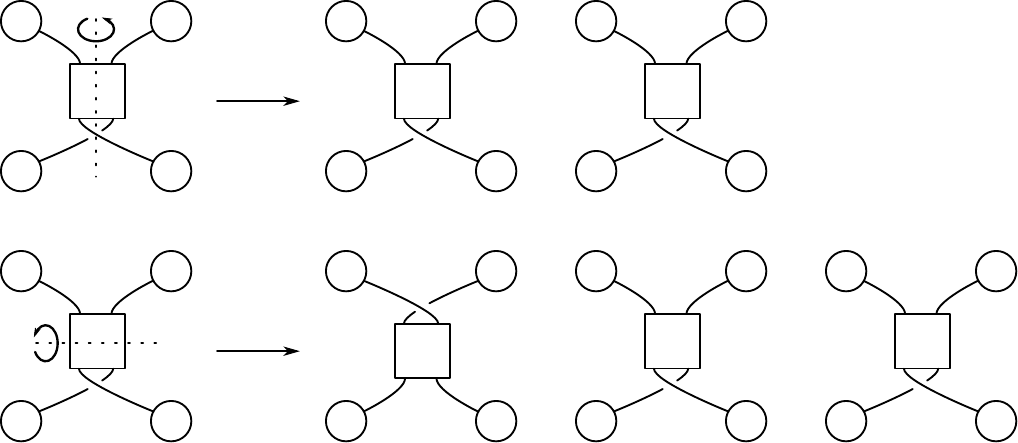}
\caption{Verifying that $T$ preserves $bdpq$ symmetry. Along the bottom, we have performed a ``flype'' move and then exploited $bdpq$ symmetry of the subtangle.}
\label{figure:tsym}
\end{figure}

    The argument for $T \cdot \tau$ is carried out in \Cref{figure:tsym}. The verification for $R \cdot \tau$ is straightforward. Either one checks pictorially as we did for $T$, or else appeals to the following principle: suppose a group $G$ acts on a set $S$, and $H \normal G$ is a normal subgroup. Let $s \in S$ be fixed by the action of $H$. Then any $g\cdot s$ is also a fixed point for the action of $H$. This can be applied to the action of $G = D_8$ the dihedral group of order $8$ (symmetries of a square) acting on the set of tangles, with $H \normal G$ the $bdpq$ symmetry group (this is of index $2$, hence normal). The nontrivial coset of $G/H$ is represented by the action of $R$, proving the result. 
    \end{proof}

\subsection{Proof of \Cref{untangle0}}
    Let $\tau \in \cT$ be a rational tangle with invariant zero. As $\tau$ is a rational tangle, it is obtained from the untangle $\tau_0$ by a sequence of twists and turns under Group Action 1 of $\Gamma$ on $\cT$ described by some word $W$ in the generators $R, T$ of $\Gamma$:
    \[
    \tau = W \cdot \tau_0.
    \]
    In light of \Cref{remark:notWD}, the condition that $\tau$ has invariant $0$ precisely means that the corresponding word $W$, acting via Group Action 2 on $\Q\cup\{\infty\}$, satisfies
    \[
    W \cdot 0 = 0,
    \]
    or more structurally, that $W$ is contained in the {\em stabilizer subgroup} of $0$: 
    \[
    W \in \Stab^2_{\Gamma}(0) \le \Gamma,
    \]
    where the superscript $2$ indicates that this is a stabilizer for Group Action 2. We seek to show that $W$ is then necessarily contained in the stabilizer of $\tau_0$:
    \[
    \Stab_\Gamma^2(0) \le \Stab^1_{\Gamma}(\tau_0).
    \]
    (As an incidental remark, observe that the assertion that the tangle invariant is well-defined amounts to showing the opposite containment.)

    To prove this, we will analyze the structure of $\Stab_\Gamma^2(0)$. As a first observation, since Group Action $2$ factors through the surjection $\phi: \Gamma \to \PSL_2(\Z)$, there is an equality
    \[
    \Stab_\Gamma^2(0) = \phi^{-1}(\Stab_{\PSL_2(\Z)}(0)).
    \]
    Thus, $\Stab_\Gamma^2(0)$ is generated by two types of elements: (a) lifts of generators of $\Stab_{\PSL_2(\Z)}(0)$, and (b) generators of $\ker(\phi)$. We will analyze each in turn.

    Suppose then that $\pm \begin{pmatrix} a&b\\c&d\end{pmatrix} \in \PSL_2(\Z)$ stabilizes $0$. Applying the formula for the group action,
    \[
    \begin{pmatrix} a & b\\ c&d \end{pmatrix} \cdot 0 = \frac{b}{d},
    \]
    showing that $b = 0$. The determinant condition then implies that, up to sign, $a = d = 1$. This gives the following description:
    \[
    \Stab_{\PSL_2(\Z)}(0) = \pair{\begin{pmatrix} 1 & 0 \\ 1 & 1 \end{pmatrix}}.
    \]
    One verifies the factorization $\begin{pmatrix} 1 & 0 \\ -1 & 1 \end{pmatrix} = trt$. Thus, we need to verify that $TRT \in \Gamma$ fixes the untangle $\tau_0$ under Group Action 1. This is easily checked with a quick diagrammatic calculation: the tangle $RT \cdot \tau_0$ is given by a single right-over-left crossing, which is then undone by applying $T$.

    We turn to the generators of type (b). In light of the presentation $\pair{R,T \mid R^2 = (TR)^3 = id}$ for $\PSL_2(\Z)$ given in \Cref{theorem:psl2z}, $\ker(\phi) \le \Gamma$ is generated by {\em $\Gamma$-conjugates} of the elements $R^2$ and $(TR)^3$. We must verify that any such conjugate $W R^2 W^{-1}$ or $W (TR)^3 W^{-1}$ fixes $\tau_0$, for $W \in \Gamma$ arbitrary.

    \begin{figure}[htbp]
\centering
\labellist
\pinlabel $TR$ at 133 228
\pinlabel $TR$ at 299 228
\pinlabel $TR$ at 467 228
\pinlabel $\sim$ at 150 55
\pinlabel $\sim$ at 305 55
\tiny
\pinlabel $b$ at 50 218
\pinlabel \rotatebox[origin=c]{90}{$b$} at 233 216
\pinlabel $q$ at 397 245
\pinlabel \rotatebox[origin=c]{270}{$b$} at 530 228
\pinlabel $b$ at 223 65
\pinlabel $b$ at 389 65
\endlabellist
\includegraphics[scale = 0.7]{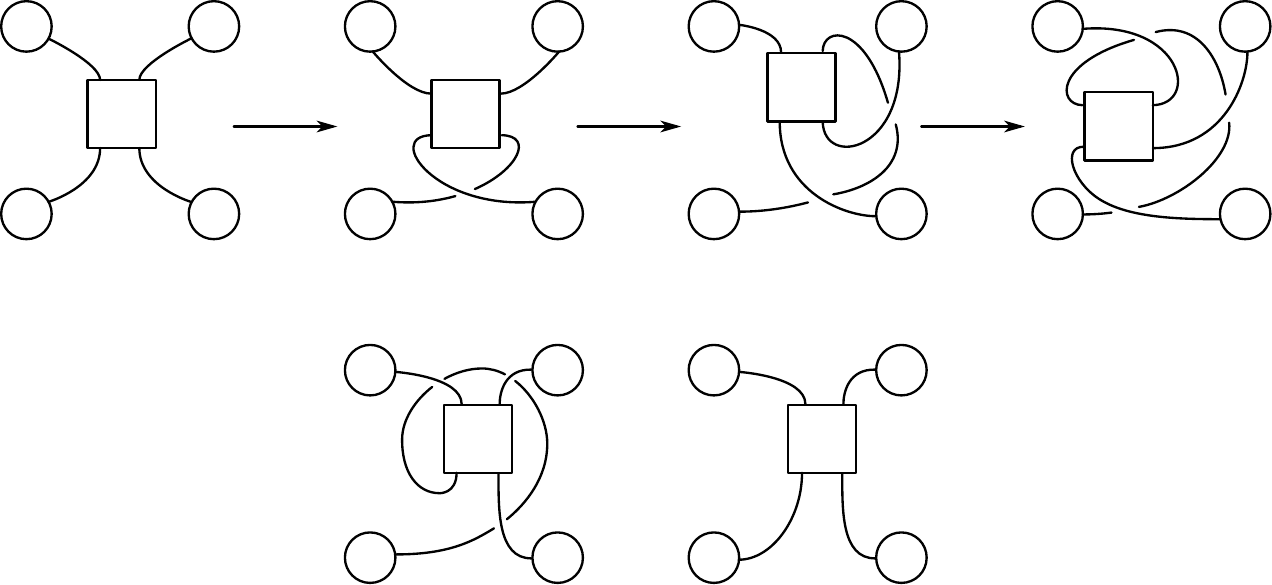}
\caption{Verifying that $(TR)^3$ acts trivially on tangles. }
\label{figure:trcubed}
\end{figure}

    To do this, observe that $W^{-1}$ takes $\tau_0$ to some arbitrary rational tangle, and that subsequent application of $W$ transforms this back into $\tau_0$. It therefore suffices to show that the elements $R^2$ and $(TR)^3$ act trivially on {\em arbitrary} rational tangles. The triviality of $R^2$ is an immediate consequence of $bdpq$ symmetry of rational tangles as established in \Cref{lemma:symmetric}. Triviality of $(TR)^3$ is verified with a picture calculation as shown in \Cref{figure:trcubed}.\qed

\begin{remark}\label{remark:matricesact}
    Observe that the final stage of this argument shows something rather remarkable: Group Action 1 {\em also} factors through $\phi: \Gamma \to \PSL_2(\Z)$. That is, {\em matrices act on tangles!}
\end{remark}

\section{Why should matrices act on tangles?}
The thing I find so charming about the tangle trick is the way in which it exploits a miraculous-looking connection between the worlds of topology (tangles) and algebra (fractions). In this section, I will offer a structural explanation for this connection. Ultimately I hope to convince you that the existence of the tangle trick is a {\em beautiful inevitability}.

\begin{wrapfigure}{r}{0.4\textwidth}
\centering
\includegraphics[width = 0.38\textwidth]{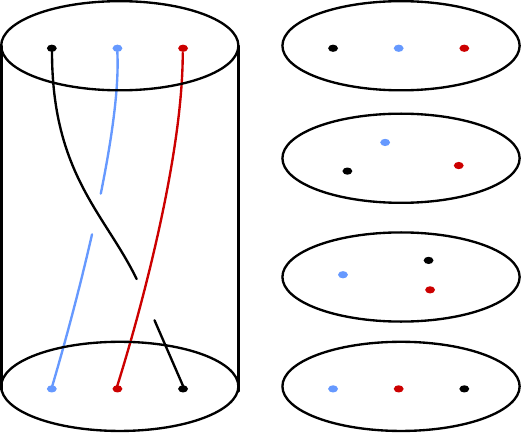}
\end{wrapfigure}

\subsection{Enter braids} To begin with, we will consider a much more natural-looking group action on the set of tangles, by the {\em braid group}.

\begin{definition}[Braid group]
    A {\em braid} on $n$ strands is a collection of $n$ continuous functions $f_i(t): [0,1] \to \C$ with the property that for all $t \in [0,1]$, the points $f_1(t), \dots, f_n(t)$ are pairwise distinct. We require that the {\em set} of beginning ($t = 0$) and end ($t = 1$) points coincide, but not necessarily that $f_i(0) = f_i(1)$ for all $i$. Taking the ``trace'' of the $f_i$'s by the composite maps $t \mapsto (f_i(t),t)$ reveals a picture of $n$ non-crossing strands properly embedded in $\C \times [0,1]$. We consider braids up to {\em isotopy}; informally this means that, like with tangles, we think of braids as being made of physical string, and are allowed to deform so long as the strands never intersect. Technically, an isotopy of a braid is a set of one-parameter continuous deformations $f_{i,s}(t)$ for $s \in [0,1]$, such that for each fixed $s$, the points $f_{i,s}(t)$ remain pairwise-disjoint. We also require $f_{i,s}(0) = f_i(0)$ and $f_{i,s}(1) = f_i(1)$ for all $s$, so that endpoints of the braids remain fixed under isotopy.

    The set of braids up to isotopy forms a group under concatenation: simply {\em stack} two braids on top of each other (and reparameterize $t$). Inversion is given by vertical reflection of the braid, or equivalently, by reversing the flow of $t$.

    The keen-eyed reader will spot that the definition we have given admits a more structural incarnation: we are describing the {\em fundamental group} of the {\em configuration space} $\Conf_n(\C)$ consisting of all collections of $n$ distinct points in $\C$: 
    \[
    B_n = \pi_1(\Conf_n(\C)). 
    \]
\end{definition}

The definition we have given makes an action of $B_3$ on the set $\cT$ of rational tangles relatively apparent.

\begin{wrapfigure}{l}{0.40\textwidth}
\centering
\labellist
\pinlabel $=$ at 180 60
\small
\pinlabel \rotatebox[origin=c]{315}{$b$} at 85 65
\pinlabel \rotatebox[origin=c]{315}{$b$} at 220 83
\endlabellist
\includegraphics[width=0.38\textwidth]{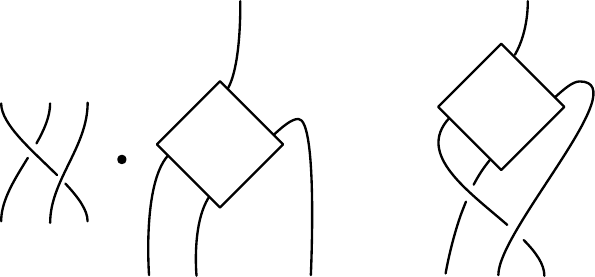}
\caption{The braid group action on tangles.}
\label{figure:braidaction}
\end{wrapfigure}

\para{Group action 3: braiding tangles} Let $\tau$ be a rational tangle, and let $\beta$ be a braid on three strands. Then there is an action $\beta \cdot \tau$ defined as follows. Imagine suspending $\tau$ from the ceiling, hanging from the rope end at position $4$ on the square. This leaves three ends dangling freely; to let $\beta$ act on $\tau$, simply braid these ends as described by $\beta$.

While it is clear that $\beta \cdot \tau$ is some kind of a ``tangle'', it is somewhat less clear that $\beta \cdot \tau$ is still {\em rational} if $\tau$ was. To see this, we will appeal to the following basic fact about braid groups.

\begin{figure}[htbp]
\centering
\labellist
\small
\pinlabel $\sigma_1$ at 40 -5
\pinlabel $\sigma_{n-1}$ at 200 -5
\endlabellist
\includegraphics[scale=1]{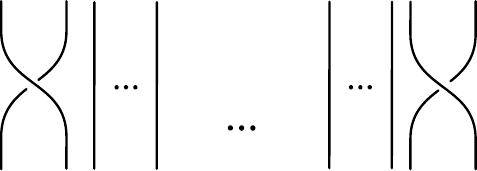}
\caption{The generators $\sigma_1, \dots, \sigma_{n-1}$ of $B_n$}
\label{figure:braidgens}
\end{figure}
\begin{lemma}\label{lemma:braidgens}
    The braid group $B_n$ is generated by the elements $\sigma_1, \dots, \sigma_{n-1}$ shown in \Cref{figure:braidgens}.
\end{lemma}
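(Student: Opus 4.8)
The plan is to prove this classical fact geometrically, by putting a braid into general position with respect to a ``real-part'' projection and reading off a word in the $\sigma_i$. Start with $\beta \in B_n$ presented as the trace of functions $f_i \colon [0,1] \to \C$, so that the $n$ strands are graphs over the parameter axis $[0,1]$. After composing $\beta$ with a generic $\R$-linear automorphism of $\C$, project $\C \times [0,1] \to \R \times [0,1]$ by $(z,t) \mapsto (\Re z, t)$. By a small isotopy of $\beta$ rel its endpoints, I would arrange that this projection is in \emph{general position}: the images of the strands are immersed arcs whose only self-intersections are transverse double points; no two double points occur at the same height $t$; and the endpoint configurations at $t = 0$ and $t = 1$ are the standard one $\{1,\dots,n\} \subset \R$, in particular with distinct real parts. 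Because every strand is a graph over $[0,1]$, there are no vertical tangencies, so the only ``events'' as $t$ increases are the finitely many double points of the projection.

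Next I would choose heights $0 = t_0 < t_1 < \cdots < t_k = 1$ such that each double point lies in the interior of exactly one slab $\R \times [t_{j-1}, t_j]$, and the configuration at every $t_j$ has $n$ distinct real parts. Ordering the strands left-to-right by real part at level $t_{j-1}$, this order is constant on $(t_{j-1},t_j)$ except that, at the single double point inside the slab, two strands that are adjacent in this order exchange places. Sliding that crossing to mid-height and straightening the other strands exhibits the part of $\beta$ in the $j$-th slab as isotopic to the standard diagram of $\sigma_{i_j}^{\varepsilon_j}$, where $i_j$ is the position of the crossing pair and the sign $\varepsilon_j = \pm 1$ records which strand passes over. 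A further isotopy supported near each $t_j$ moves the configuration there to $\{1,\dots,n\}$, so the slabs concatenate as genuine braids based at the standard configuration; stacking them gives $\beta = \sigma_{i_1}^{\varepsilon_1}\cdots\sigma_{i_k}^{\varepsilon_k}$ in $B_n$, which is the claim.

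The step requiring the most care — the technical heart of the argument — is the general-position claim: extracting from an arbitrary braid an isotopic representative, \emph{rel endpoints}, whose real-part projection has only transverse double points occurring at pairwise distinct heights. This is a routine transversality argument (the configurations to be avoided — three strands sharing a real part, two strands with a non-transverse contact, two double points at a common height — form subsets of positive codimension in a suitable space of strand families), but it is the only place where anything genuinely must be checked; everything after it is bookkeeping.

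As an alternative I would mention the purely homotopy-theoretic proof via the Fadell--Neuwirth fibration $\PConf_n(\C) \to \PConf_{n-1}(\C)$ (here $\PConf$ denotes the \emph{ordered} configuration space) that forgets the last point. Its fiber is $\C$ with $n-1$ points removed, which is connected, so the tail $\pi_1(\mathrm{fiber}) \to \pi_1(\PConf_n(\C)) \to \pi_1(\PConf_{n-1}(\C)) \to 1$ of the long exact sequence of homotopy groups is exact; by induction one finds that the pure braid group $P_n = \pi_1(\PConf_n(\C))$ is generated by the band generators $A_{ij}$, each of which is visibly a product of $\sigma_\ell^{\pm 1}$'s. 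Since $P_n$ is the kernel of the natural surjection $B_n \to S_n$ onto the symmetric group, and $S_n$ is generated by adjacent transpositions, which are the images of the $\sigma_\ell$, it follows that $B_n = \pair{\sigma_1,\dots,\sigma_{n-1}}$ as well.
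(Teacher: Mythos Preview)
Your main argument is correct and is a careful, rigorous expansion of the paper's one-sentence proof, which simply asserts that every braid is isotopic to one consisting of a finite sequence of crossings, each of which is some $\sigma_i^{\pm 1}$. The Fadell--Neuwirth alternative you sketch is also correct and is a genuinely different, homotopy-theoretic route, but your primary approach matches the paper's.
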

\begin{proof}
    Every braid (is isotopic to one that) consists of a finite sequence of crossings of one strand over another, and each such crossing is one of the elements $\sigma_i$ or its inverse. 
\end{proof}

\begin{lemma}
    Let $\beta \in B_3$ be given, and let $\tau \in \cT$ be a rational tangle. Under Group Action 3, $\beta \cdot \tau$ is likewise a rational tangle.
\end{lemma}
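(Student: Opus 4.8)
The plan is to reduce to the standard generators of $B_3$ and then to identify how those generators act. By \Cref{lemma:braidgens}, any $\beta \in B_3$ can be written as a word $\beta = \sigma_{i_1}^{\epsilon_1}\cdots \sigma_{i_k}^{\epsilon_k}$ with each $i_j \in \{1,2\}$ and $\epsilon_j \in \{\pm 1\}$. Since Group Action 3 is a genuine action of $B_3$ (stacking braids composes the corresponding operations on a tangle), it suffices to treat $k = 1$: that is, to show that $\sigma_1^{\pm 1}\cdot \tau$ and $\sigma_2^{\pm 1}\cdot \tau$ are rational whenever $\tau$ is. The general case then follows by applying one crossing at a time.

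Next I would identify these generators with moves we already understand. Suspend $\tau$ from the end at position $4$, so that the ends at positions $1,2,3$ dangle. The braid $\sigma_1$ introduces a single crossing between the ends at positions $1$ and $2$ --- but that is precisely the twist move, so $\sigma_1 \cdot \tau = T^{\epsilon}\cdot \tau$ for the appropriate sign $\epsilon \in \{\pm 1\}$ (and $\sigma_1^{-1}\cdot \tau = T^{-\epsilon}\cdot \tau$). The braid $\sigma_2$ introduces a crossing between the ends at positions $2$ and $3$; to express its effect in terms of $T$ and $R$, first perform a turn carrying the ends at positions $2$ and $3$ onto positions $1$ and $2$, then perform $T^{\pm 1}$ there, then turn back. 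This exhibits $\sigma_2 \cdot \tau$ as $w \cdot \tau$ for a word $w$ that is a conjugate of $T^{\pm 1}$ by a power of $R$; the precise sign and precise power of $R$ are read off a picture, in the spirit of \Cref{figure:braidaction}. The only point needing care here is that the turn move rotates all four ends, including the one hung from the ceiling; but the compensating turn returns that end to position $4$, and since both $R$ and $R^{-1} = R^3$ are available (recall $R^4$ acts trivially on tangles), this is harmless.

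With these identifications in hand the conclusion is immediate. Each of $\sigma_1^{\pm 1}$ and $\sigma_2^{\pm 1}$ acts on an arbitrary rational tangle $\tau$ as $w \cdot \tau$ for some fixed word $w$ in the letters $R^{\pm 1}, T^{\pm 1}$, i.e.\ for some $w$ in the free group $\Gamma$ on $R$ and $T$. Since $\cT$ is by construction closed under the action of $\Gamma$ (Group Action 1), we get $\sigma_i^{\pm 1}\cdot \tau \in \cT$, the induction closes, and therefore $\beta \cdot \tau \in \cT$.

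The only genuinely non-formal input is the pair of diagrammatic identifications of $\sigma_1$ and $\sigma_2$ with words in $T$ and $R$, and that is the step I expect to demand the most care --- though it amounts to routine manipulation of tangle pictures rather than any real obstacle.
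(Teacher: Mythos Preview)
Your argument is correct and follows the same route as the paper: reduce to the generators via \Cref{lemma:braidgens}, then identify each $\sigma_i$ with a word in $T$ and $R$ so that closure of $\cT$ under Group Action~1 finishes the job. The paper is merely more specific, recording (from \Cref{figure:braidaction}) that $\sigma_1$ acts as $T$ and $\sigma_2$ as $RTR$, whereas you leave the precise words to be read off a picture.
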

\begin{proof}
    Following \Cref{lemma:braidgens}, to see that the braid action preserves rationality of tangles, it suffices to verify this for the two generators $\sigma_1$ and $\sigma_2$ of $B_3$. As can be readily inferred from \Cref{figure:braidaction}, the action of $\sigma_1$ coincides with $T$, and the action of $\sigma_2$ coincides with $RTR$.
\end{proof}

The action of $B_3$ on $\cT$ is natural, but the connection with $\PSL_2(\Z)$ is not yet clear. As a first hint, we saw above that $\sigma_1$ acts as $T$ on $\cT$. With a little more work, we can also identify a braid that acts as $R$.

\begin{lemma}\label{lemma:R}
    Let $\tau \in \cT$ be a rational tangle. Then the rational tangle $R \cdot \tau$ (under Group Action 1) is given by $(\sigma_1 \sigma_2 \sigma_1) \cdot \tau$ under Group Action 3.
\end{lemma}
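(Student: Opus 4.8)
The plan is to verify the claimed identity diagrammatically, using the pictorial conventions already established for the $T$ and $R$ moves (\Cref{figure:twist,figure:turn}) and for the braid action (\Cref{figure:braidaction}). Since both sides of the asserted equality are operations on an arbitrary rational tangle $\tau$, and since $T$ already coincides with $\sigma_1$ by the previous lemma, it suffices to reduce everything to a statement about how the braid $\sigma_1\sigma_2\sigma_1$ acts on the three dangling ends of $\tau$ (suspended from position $4$) and to check that the resulting configuration agrees, as a tangle, with $R \cdot \tau$.

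First I would recall that in \Cref{figure:braidaction} the generator $\sigma_1$ was identified with $T$ and $\sigma_2$ with $RTR$; hence $\sigma_1 \sigma_2 \sigma_1$ acts as $T \cdot RTR \cdot T$ under Group Action 1, so the claim is equivalent to the relation $T R T R T = R$ holding on all rational tangles — equivalently, after applying $bdpq$ symmetry to rewrite $R^{-1} = R^3$ via $R^4 = \id$ (which follows from \Cref{lemma:symmetric}), to checking a closed identity in $\Gamma$ modulo the relations that act trivially on $\cT$. Rather than argue purely algebraically, though, the cleanest route is a picture: draw $\tau$ hanging from position $4$, apply the three crossings $\sigma_1, \sigma_2, \sigma_1$ to the dangling strands in order, and then perform an isotopy (a rotation of the ambient square, or a sequence of flypes like those used in \Cref{figure:tsym} and \Cref{figure:trcubed}) exhibiting the result as the same tangle produced by rotating $\tau$ counterclockwise, i.e. $R \cdot \tau$. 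The $bdpq$ symmetry of $\tau$ is exactly what licenses the final rearrangement, just as it did in the proof of \Cref{untangle0}.

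The main obstacle is bookkeeping the relative position of the strands under the braid action: the braid $\sigma_1\sigma_2\sigma_1$ is the half-twist on three strands, and one must be careful that "braid the three free ends" is interpreted with the correct strand hanging from position $4$ held fixed and the correct over/under conventions matching those fixed for $T$ and $R$. Concretely, I expect the crux to be showing that the half-twist $\sigma_1\sigma_2\sigma_1$, applied to the three ends at positions $1,2,3$, has the same effect on $\tau$ as the $90^\circ$ rotation $R$ — this is plausible since the half-twist is, up to the braid relation $\sigma_1\sigma_2\sigma_1 = \sigma_2\sigma_1\sigma_2$, the "rotation" element of $B_3$, but turning that slogan into a verified equality of tangles requires drawing the isotopy carefully. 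Once the picture is drawn correctly the verification is routine, and (as with the earlier lemmas) the figure does most of the work.
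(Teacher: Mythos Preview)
Your pictorial approach---suspend $\tau$ from position $4$, apply the half-twist $\sigma_1\sigma_2\sigma_1$ to the three free ends, then isotope via a rotation of the square together with $bdpq$ symmetry---is exactly what the paper does; its entire proof is a single figure carrying out precisely this sequence.

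The algebraic reduction you sketch first is also valid and gives a figure-free alternative the paper does not take: since $\sigma_1$ acts as $T$ and $\sigma_2$ as $RTR$ (from the preceding lemma), $\sigma_1\sigma_2\sigma_1$ acts as $TRTRT$, and comparing this to $R$ amounts (using $R^2=\id$ on tangles) to checking that $(RT)^3$ acts trivially---which is a conjugate of the relation $(TR)^3=\id$ already verified in the proof of \Cref{untangle0}. The picture is quicker, but your algebraic route has the virtue of making explicit that \Cref{lemma:R} contains no content beyond the relations $R^2=(TR)^3=\id$ already in hand.
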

\begin{proof}
    See \Cref{figure:raction}.
\end{proof}
\begin{figure}[htbp]
\centering
\labellist
\pinlabel $R$ at 135 220
\pinlabel $\sigma_1\sigma_2\sigma_1$ at 20 80
\pinlabel $\mbox{rotate}+bdpq$ at 285 80
\tiny
\pinlabel \rotatebox[origin=c]{315}{b} at 14 229
\pinlabel \rotatebox[origin=c]{45}{b} at 247 210
\pinlabel \rotatebox[origin=c]{315}{b} at 109 80
\endlabellist
\includegraphics[scale=0.7]{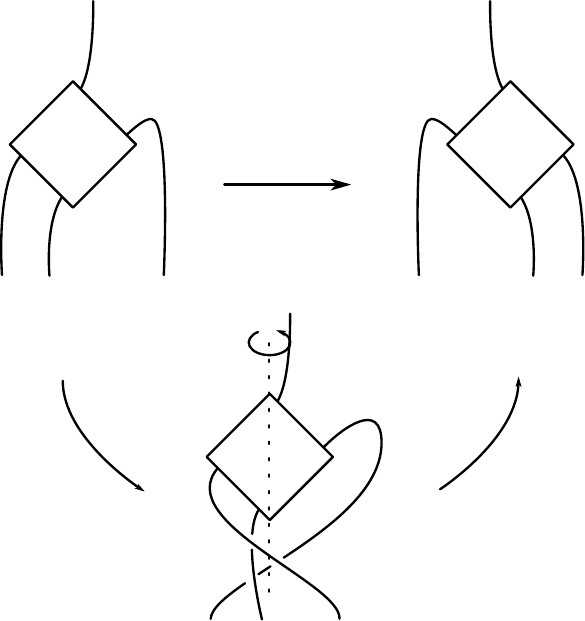}
\caption{Expressing $R$ via the action of $\sigma_1\sigma_2\sigma_1$.}
\label{figure:raction}
\end{figure}

Recalling from the proof of \Cref{untangle0} that $R^2$ acts trivially on $\cT$, it follows that the element $(\sigma_1 \sigma_2 \sigma_1)^2 \in B_3$ likewise acts trivially. This fact ends up being a crucial bridge between braids and matrices!

\begin{proposition}\label{prop:b3modz}
    The center $Z(B_3)$ of $B_3$ is infinite cyclic and is generated by the element $(\sigma_1 \sigma_2 \sigma_1)^2 = (\sigma_1 \sigma_2)^3$. Moreover, there is an isomorphism
    \[
    B_3 / Z(B_3) \cong \PSL_2(\Z)
    \]
    with
    \[
    \sigma_1 \mapsto t = \begin{pmatrix}
        1 & 1\\ 0 & 1
    \end{pmatrix}, \qquad \sigma_2 \mapsto rtr = \begin{pmatrix}
        1 & 0 \\ -1 & 1
    \end{pmatrix}.
    \]
\end{proposition}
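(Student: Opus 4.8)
The plan is to establish the two assertions of the proposition in sequence: first the identification of $Z(B_3)$, then the isomorphism $B_3/Z(B_3) \cong \PSL_2(\Z)$. I would begin from the classical presentation $B_3 = \pair{\sigma_1, \sigma_2 \mid \sigma_1\sigma_2\sigma_1 = \sigma_2\sigma_1\sigma_2}$ (this can be taken as known, or derived from \Cref{lemma:braidgens} together with the standard relation). Setting $\delta = \sigma_1\sigma_2\sigma_1$ and $z = \delta^2 = (\sigma_1\sigma_2)^3$, the first step is a direct computation showing that $z$ is central: from the braid relation one checks $\delta \sigma_1 \delta^{-1} = \sigma_2$ and $\delta\sigma_2\delta^{-1} = \sigma_1$, so $\delta$ conjugates the generating set to itself (swapping the two generators), whence $\delta^2$ commutes with both $\sigma_1$ and $\sigma_2$ and therefore with all of $B_3$. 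The identity $(\sigma_1\sigma_2\sigma_1)^2 = (\sigma_1\sigma_2)^3$ is likewise a one-line manipulation using the braid relation.

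Next I would produce the surjection $B_3 \onto \PSL_2(\Z)$. Using the presentation $\PSL_2(\Z) \cong \pair{r, t \mid r^2 = (tr)^3 = id}$ from \Cref{theorem:psl2z}, it suffices to exhibit a homomorphism in the other direction or, more directly, to define $\psi: B_3 \to \PSL_2(\Z)$ on generators by $\sigma_1 \mapsto t$ and $\sigma_2 \mapsto rtr$ and verify it respects the braid relation. This last check is the computation $t(rtr)t = (rtr)t(rtr)$ in $\PSL_2(\Z)$, which one confirms using $r^2 = id$ and $(tr)^3 = id$ (equivalently by multiplying the explicit $2\times 2$ matrices $\begin{pmatrix} 1 & 1 \\ 0 & 1\end{pmatrix}$ and $\begin{pmatrix} 1 & 0 \\ -1 & 1 \end{pmatrix}$). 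Surjectivity of $\psi$ follows because $t$ and $rtr$ generate $\PSL_2(\Z)$: we have $r = (rtr)\cdot\ (\text{word in } t, rtr)$ — concretely, one recovers $r$ and $t$ from $t$ and $rtr$ using the relations, so the image contains the generating set $\{r, t\}$ of \Cref{theorem:psl2z}.

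I then need to identify $\ker\psi$ with $\pair{z}$. One containment is easy: $\psi(z) = (t\cdot rtr)^3 = (tr\cdot tr \cdot tr)^3$... more carefully, $\psi(\delta) = t(rtr)t = $ the order-two-lift issue aside, $\psi(\delta)^2 = \psi(z)$, and a short computation using $(tr)^3 = id$ shows $\psi(z) = id$, so $\pair{z} \le \ker\psi$. For the reverse containment I would use a counting/structural argument: $B_3/\pair{z}$ has the presentation obtained from that of $B_3$ by adjoining $\delta^2 = 1$, and one manipulates this (introducing $\delta$ as a new generator, rewriting $\sigma_1 = ?$, $\sigma_2 = ?$ in terms of $\delta$ and $c = \sigma_1\sigma_2$) into exactly the presentation $\pair{r, t \mid r^2 = (tr)^3 = id}$; hence $B_3/\pair{z} \cong \PSL_2(\Z)$ and, since $\psi$ factors through this quotient as a surjection between them, it must be an isomorphism, forcing $\ker\psi = \pair{z}$. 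Finally, $Z(B_3) \le \ker\psi$ because $\PSL_2(\Z)$ has trivial center (its abelianization is $\Z/6$, and a direct check on the generators shows no nontrivial element is central — or one invokes that $\PSL_2(\Z) \cong \Z/2 * \Z/3$ is centerless); combined with $\pair{z} \le Z(B_3)$ this gives $Z(B_3) = \pair{z}$, which is infinite cyclic since $z$ has infinite order (its image under $B_3 \to B_3^{\ab} \cong \Z$ is nonzero, as $z$ is a positive word of length $6$).

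The main obstacle I anticipate is the Tietze-transformation argument identifying $B_3/\pair{z}$ with the stated presentation of $\PSL_2(\Z)$: bookkeeping the change of generators ($\sigma_1, \sigma_2 \leftrightarrow t, rtr$, with $\delta$ and $\sigma_1\sigma_2$ as auxiliary elements) and checking that the relations match up exactly — rather than up to some unexamined extra relation — requires care. An alternative route that sidesteps the abstract presentation juggling is to prove $\ker\psi = \pair{z}$ geometrically, via the well-known fact that $\Conf_3(\C)$ (after quotienting by the affine $\C^* \ltimes \C$ action that accounts for the center) is a $K(\pi,1)$ for $\PSL_2(\Z)$ — but that is a heavier tool than the elementary presentation argument and is arguably the subject of the next section anyway, so I would keep it in reserve.
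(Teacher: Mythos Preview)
Your approach is essentially the same as the paper's: both start from the braid presentation, verify that $(\sigma_1\sigma_2\sigma_1)^2$ is central, transform the quotient presentation into the presentation of $\PSL_2(\Z)$ from \Cref{theorem:psl2z} via Tietze moves, and then deduce that this element generates the full center from the centerlessness of $\PSL_2(\Z)$. The paper's proof is only a sketch of precisely these steps, so your more detailed execution is on target; the one spot to tidy is your surjectivity/kernel computation, where the clean line is $\psi(\delta) = t(rtr)t = r$ (from $(tr)^3 = \id$), giving both surjectivity and $\psi(z) = r^2 = \id$ at once.
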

\begin{proof}
    These facts are not especially hard to establish via pure group theory, at least the more restrictive assertion (sufficient for our purposes) that the subgroup generated by $(\sigma_1 \sigma_2 \sigma_1)^2$ is central (hence normal) and has quotient $\PSL_2(\Z)$. One proceeds via the famous presentation
    \[
    B_3 = \pair{\sigma_1, \sigma_2 \mid \sigma_1 \sigma_2 \sigma_1 = \sigma_2 \sigma_1 \sigma_2}
    \]
    (the above relation is known as the {\em braid relation}), seeing that the induced presentation for $B_3/\pair{(\sigma_1 \sigma_2 \sigma_1)^2}$ can be transformed into the presentation for $\PSL_2(\Z)$ obtained above. It is somewhat more subtle to show that $(\sigma_1 \sigma_2 \sigma_1)^2$ {\em generates} the center. This is equivalent to the fact (easily checkable by calculation) that $\PSL_2(\Z)$ has trivial center. One can also appeal to the theory of mapping class groups - see \cite[Section 9.2]{FM}.
\end{proof}

\begin{corollary}\label{corollary:factors}
    Group Action 3 of $B_3$ on $\cT$ factors through the quotient $B_3 / Z(B_3) \cong \PSL_2(\Z)$.
\end{corollary}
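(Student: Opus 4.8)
The plan is to show that the kernel of the homomorphism $B_3 \to \Aut(\cT)$ encoding Group Action 3 contains the central subgroup $Z(B_3)$; once this is in place, the universal property of the quotient immediately produces the desired factorization through $B_3/Z(B_3)$, which is identified with $\PSL_2(\Z)$ in \Cref{prop:b3modz}.

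First I would use \Cref{prop:b3modz} to reduce the problem to a single group element: since $Z(B_3)$ is infinite cyclic generated by $(\sigma_1\sigma_2\sigma_1)^2$, it suffices to check that this one element acts trivially on every rational tangle. Next I would assemble two facts already established. By \Cref{lemma:R}, the braid $\sigma_1\sigma_2\sigma_1$ acts on $\cT$ under Group Action 3 exactly as the move $R$ of Group Action 1; hence $(\sigma_1\sigma_2\sigma_1)^2$ acts as $R^2$. But in the proof of \Cref{untangle0} we observed that $R^2$ acts trivially on every rational tangle, this being a direct consequence of the $bdpq$ symmetry supplied by \Cref{lemma:symmetric}. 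Therefore $(\sigma_1\sigma_2\sigma_1)^2$, and with it all of $Z(B_3)$, lies in the kernel of Group Action 3.

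I do not expect a genuine obstacle here. The only point that wants care is that one should identify the braid action of $\sigma_1\sigma_2\sigma_1$ with the move $R$ (via \Cref{lemma:R}) and only then square, rather than trying to untangle $(\sigma_1\sigma_2\sigma_1)^2$ diagrammatically from scratch. It is also worth noting that the full strength of \Cref{prop:b3modz} is not needed for the factorization itself: knowing only that $\pair{(\sigma_1\sigma_2\sigma_1)^2}$ is a central subgroup with quotient $\PSL_2(\Z)$ already yields a homomorphism $\PSL_2(\Z) \to \Aut(\cT)$ realizing the action, and the identification of this subgroup with the entire center $Z(B_3)$ then follows a posteriori from the fact that $\PSL_2(\Z)$ is centerless.
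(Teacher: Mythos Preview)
Your proposal is correct and matches the paper's approach exactly: the paper presents this corollary without a separate proof, relying on the observation (stated just before \Cref{prop:b3modz}) that $(\sigma_1\sigma_2\sigma_1)^2$ acts on $\cT$ as $R^2$ via \Cref{lemma:R}, hence trivially by $bdpq$ symmetry, combined with the identification of $Z(B_3)$ in \Cref{prop:b3modz}. Your closing remark about not needing the full strength of \Cref{prop:b3modz} also echoes the paper's own parenthetical in the proof of that proposition.
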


The upshot of this is that we are edging towards a satisfactory understanding of the mechanism underlying the tangle trick. We start with a natural action of the braid group on the set of rational tangles. \Cref{prop:b3modz} and \Cref{corollary:factors} together imply that this action factors through a quotient of $B_3$ which {\em happens} to be $\PSL_2(\Z)$, and from here we exploit the faithful action of $\PSL_2(\Z)$ on $\Q\cup \{\infty\}$ to track the action. 

The remaining mystery is why the isomorphism 
\[
B_3 / Z(B_3) \cong \PSL_2(\Z)
\]
should exist. What is a coherent explanation for this connection between braids and matrices?

\subsection{The mystery revealed: moduli spaces}\label{section:EC} The answer to this question will take us to the interface of topology and algebraic geometry. As a disclaimer, this section is meant as a high-level overview. I ask the forgiveness of the experts for the many subtleties I will attempt to finesse for the sake of clarity and concision, and I ask the reader for understanding that I will assume more mathematical sophistication and that most details will be suppressed. 

To start with, a bit of topological philosophy is in order. One of the basic principles of geometric group theory (which I am considering here to be a branch of topology, for better or for worse) is that it is profitable to understand {\em groups} as {\em fundamental groups of spaces}. Since the fundamental group is a {\em functor}, one coherent structural mechanism for understanding group homomorphisms is to understand continuous maps between associated spaces. 

In our case, this means the following: we seek to find spaces $X$ and $Y$ with fundamental group $\pi_1(X) = B_3$ and $\pi_1(Y) = \PSL_2(\Z)$, and some map $f: X \to Y$ which induces the quotient $B_3 \to \PSL_2(\Z)$ on fundamental groups. We have already found a good candidate for $X$, in the space $\Conf_n(\C)$. It remains to find a space $Y$ with fundamental group $\PSL_2(\Z)$.

For this, we encounter the notion of a {\em moduli space}. Coarsely,\footnote{For the {\em cognoscenti}: the pun is very much intended.} a moduli space is just a topological space whose points {\em parameterize} isomorphism classes of some kind of mathematical object. For us, the relevant mathematical object will be an {\em elliptic curve}. 

There are two points of view on elliptic curves that we will encounter. The first, which we shall call the {\em geometric} perspective, is that an elliptic curve is a quotient $E = \C/\Lambda$, where $\Lambda \le \C$ is a {\em lattice}, i.e. a subgroup isomorphic to $\Z^2$ generated by two elements of $\C$ that are linearly independent over $\R$. The second, {\em algebraic} point of view, is that an elliptic curve is the solution set in $\C^2$ to an equation of the form $y^2 = f(x)$, where $f(x)$ is a monic polynomial of degree $3$ with distinct roots. 

It is far from clear that these two points of view describe the same set of objects. Nevertheless, since work of Weierstrass, Abel, and Jacobi in the first half of the 19th century, it has been understood that each geometric elliptic curve $\C/\Lambda$ corresponds to an algebraic one $y^2 = f(x)$ and vice versa.\footnote{\label{aj}Unfortunately it would expand the scope of the article considerably to do these beautiful ideas justice, and some buzzwords will have to suffice. Weierstrass constructed his ``$\wp$ function'' on a geometric elliptic curve $\C/\Lambda$ and showed that it satisfied a nonlinear first-order ODE of the form $(\wp')^2 = f(\wp)$ for $f$ a cubic polynomial. Conversely, given an algebraic elliptic curve, a geometric elliptic curve $\C/\Lambda$ emerges as its {\em Jacobian}, by integrating a canonical differential form present on the algebraic model.} {\bf Ultimately it is this correspondence that explains the tangle trick!}\\

To elucidate this, let us return to the notion of a {\em moduli space}. It is not so hard to make sense of the {\em space of all lattices}. To construct this, we start with the notion of a {\em marked lattice}, which is nothing more than a pair $(z,w)$ of complex numbers linearly independent over $\R$; taking the $\Z$-span of $z,w$ then yields a lattice. A {\em marking} of a lattice $\Lambda$ is a choice of elements $z,w \in \Lambda$ for which $\Lambda = \pair{z,w}$. For reasons that we don't quite have the space to get into, it is best to consider (marked) lattices up to {\em homothety}, and take equivalence classes under the action of $\C^*$ (via $\lambda \cdot (z,w) = (\lambda z, \lambda w)$). After homothety, we can take $z = 1$ and $w = \tau$ a complex number in the {\em upper half-plane} $\hyp$ of complex numbers with positive imaginary part. The conclusion is that the space of {\em marked} lattices is nothing but $\hyp$.

To consider the space of {\em unmarked} lattices, we consider the set of markings of a given lattice $\Lambda$. A little bit of basic algebra shows that if $(z,w)$ and $(z',w')$ are markings of $\Lambda$, {\em then there is an element $A \in \SL_2(\Z)$ such that $A (z,w) = (z',w')$} (the product here denoting ordinary matrix multiplication). The upshot is that one can construct the space of unmarked lattices by first constructing the space $\hyp$ of marked lattices, then identifying two points in $\hyp$ if they give the same unmarked lattice. Thus, the {\em moduli space of elliptic curves} $\cM_{1,1}$ is constructed as the quotient 
\[
\cM_{1,1} = \hyp/\SL_2(\Z),
\]
where $\SL_2(\Z)$ acts on $\hyp$ by changing the marking. Given our normalization conventions (insisting that $z = 1$), the {\em linear} action of $\SL_2(\Z)$ on the space of markings $(z,w)$ descends to an action of $\SL_2(\Z)$ on $\hyp$ {\em via M\"obius transformations},\footnote{This is not entirely obvious, but can be easily verified directly.} and as before, this in fact descends to an action of $\PSL_2(\Z)$. 

Recall that our immediate goal was to identify a space $Y$ with fundamental group $\PSL_2(\Z)$. Let us see the extent to which the discussion above achieves this. In general, topology tells us that if $S$ is a simply-connected ($\pi_1(S)$ trivial) topological space, and $G$ is a group acting on $S$ {\em freely} (nontrivial elements act without fixed points) and ``properly discontinuously'' (a technical condition I won't discuss further), then the quotient space $S/G$ has fundamental group $G$. This is {\em almost} true in the case of $\PSL_2(\Z)$ acting on $\hyp$: the mysterious proper discontinuity condition is satisfied, but the action is not quite free: for instance, the M\"obius action of $r = \begin{pmatrix}
    0&-1\\ 1 & 0
\end{pmatrix}$ fixes $i = -1/i \in \hyp$. 

The end result is that it is not {\em literally} true that the quotient $\hyp/\PSL_2(\Z)$ has fundamental group $\PSL_2(\Z)$ (in fact, the quotient space is simply-connected). But it turns out to be {\em very nearly} true. The workaround is to {\em enhance} the quotient $\hyp/\PSL_2(\Z)$ with extra information at the fixed points of the action and remember the stabilizer subgroups. This leads to the notion of an {\em orbifold} (or, for the algebro-geometrically inclined, a {\em stack}). I will not even pretend to get into a discussion of these concepts in any depth, but suffice it to say that there is a generalization of the notion of fundamental groups for orbifolds, and that the {\em orbifold fundamental group} is what we expect:
\[
\pi_1^{orb}(\hyp / \PSL_2(\Z)) \cong \PSL_2(\Z).
\]

It is time for the final act of our magic show. Our objective in this section has been to identify a map between spaces $f: X \to Y$ that gives a topological incarnation of the mysterious homomorphism $B_3 \to \PSL_2(\Z)$ that makes the tangle trick work. My final assertion is that there is an extremely natural map
\[
AJ: \Conf_3(\C) \to \hyp/\PSL_2(\Z)
\]
which induces the map $B_3 \to \PSL_2(\Z)$ upon passing to (orbifold) fundamental groups. The notation $AJ$ makes reference to the {\em Abel-Jacobi map}, of which this is essentially a special case. 

The basic idea of $AJ$ is to convert an {\em algebraic} elliptic curve (i.e. the solutions to some equation $y^2 = f(x)$) into its corresponding {\em geometric} description as some $\C/\Lambda$ (see \Cref{aj} above). But the domain of $AJ$ is the space $\Conf_3(\C)$ - how do we associate points in this space to such equations? It is actually quite simple: a point in $\Conf_3(\C)$ is by definition an unordered set $\{z_1, z_2, z_3\}$ of distinct complex numbers. By the Fundamental Theorem of Algebra, such tuples are in bijective correspondence with the set of monic cubic polynomials with distinct {\em roots}. Explicitly, the association assigns the tuple $\{z_1, z_2,z_3\} \in \Conf_3(\C)$ to the elliptic curve
\[
y^2 = (z-z_1)(z-z_2)(z-z_3).
\]
From here, the foundational 19th century results alluded to above convert the given equation into a geometric elliptic curve of the form $\C/\Lambda$.

\section{A final curiosity: why no untwisting?}

Let us return to a somewhat subtle aspect of the tangle trick. Recall that when the {\sc magician} returns to the room and is told the tangle invariant by the {\sc assistant}, they then proceed to (apparently) {\em further complicate} the tangle, adding in additional twists and turns in the same direction as before. Following the explication of the solution process (\Cref{example}), we are at least assured that it is {\em possible} to untangle any rational tangle by some further sequence of twists and turns. But I'd like to draw attention to how weird this is: in most actions of infinite groups on spaces (e.g. $\Z^2$ acting by translation on $\R^2$, or the free group $F_2$ acting on an infinite $4$-valent tree), if you keep acting only by {\em positive} generators, you never return to where you start. (Admittedly, this can be artificially circumvented by adding in new generators equal to the inverses of others, but this is not really in the spirit of what I'm getting at). What is it about the action of $B_3$ on rational tangles that allows you to act transitively using only the {\em monoid} of positive words?

To answer this, we return to the fact mentioned in \Cref{prop:b3modz} that $Z(B_3)$ is generated by the element $(\sigma_1 \sigma_2 \sigma_1)^2$. Let us define
\[
\Delta = \sigma_1 \sigma_2 \sigma_1 = \sigma_2 \sigma_1 \sigma_2,
\]
so that $Z(B_3)$ is generated by $\Delta^2$. Following Garside \cite{garside}, the element $\Delta$ is called the {\em fundamental element} of $B_3$.

The fact that positive moves suffice for untangling is explained by properties of $\Delta$. As we observed after \Cref{lemma:R}, $\Delta^2$ acts trivially on $\cT$. Combined with the fact that $\Delta^2$ is central, this implies the following explanation of our phenomenon.

\begin{lemma}
    Let $W$ be a word in the letters $R$ and $T$ {\em and their inverses}. Then there is a word $W^+$ on $R$ and $T$ only, such that for any rational tangle $\tau$,
    \[
    W \cdot \tau = W^+ \cdot \tau.
    \]
\end{lemma}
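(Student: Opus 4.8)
The plan is to work inside the braid group $B_3$ (equivalently, using the action of $\Gamma$ factored through $\phi: \Gamma \to \PSL_2(\Z)$) and exploit the special structure of $\Delta$. Recall from the discussion preceding the lemma that $R$ acts as $\sigma_1\sigma_2\sigma_1 = \Delta$ and $T$ acts as $\sigma_1$ on $\cT$ (via Group Action 3), and that $\Delta^2$ acts trivially and is central. So a word $W$ in $R^{\pm 1}, T^{\pm 1}$ determines an element of $B_3$ (well, of the quotient acting on $\cT$), and it suffices to show: for any $\beta \in B_3$ there is a word in the \emph{positive} letters $\sigma_1, \sigma_2$ (or rather, in positive powers of $\Delta = \sigma_1\sigma_2\sigma_1$ and $\sigma_1$) representing the same element of $B_3 / \langle \Delta^2\rangle$. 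Since $\Delta^2$ acts trivially on $\cT$, I only need this modulo the central subgroup $\langle \Delta^2 \rangle$, which gives a lot of room.

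The key point is that \emph{$\Delta^{-1}$ can be rewritten positively modulo $\Delta^2$}: namely $\Delta^{-1} = \Delta^{-1}\cdot \Delta^2 \cdot \Delta^{-2} $ is not yet helpful, but $\Delta^{-1} \equiv \Delta$ modulo $\Delta^2$, and more usefully one has Garside's fact that $\Delta \sigma_i \Delta^{-1} = \sigma_{j}$ for the appropriate other generator (here $\Delta\sigma_1\Delta^{-1} = \sigma_2$, $\Delta\sigma_2\Delta^{-1} = \sigma_1$), so conjugation by $\Delta$ carries positive words to positive words. Concretely, the recipe is: first eliminate inverse letters one at a time. An inverse generator $\sigma_i^{-1}$ can be written as $\sigma_i^{-1} = \Delta^{-2}\cdot (\Delta^2 \sigma_i^{-1})$, and $\Delta^2\sigma_i^{-1}$ is a \emph{positive} braid word (this is the standard fact that $\sigma_i^{-1}\cdot(\text{positive complement}) = \Delta$, hence $\Delta^2 \sigma_i^{-1} = \Delta\cdot(\Delta\sigma_i^{-1}) = \Delta \cdot w_i$ with $w_i$ positive — $w_i$ is the "right complement" of $\sigma_i$ in $\Delta$). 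Thus $\sigma_i^{-1} \equiv \Delta\, w_i \pmod{\Delta^2}$, a positive word. Doing this for every inverse letter in $W$, and then sliding all the resulting $\Delta^2$-factors to one end using centrality and deleting them (since they act trivially), produces the desired positive word $W^+$. Finally I translate back: a positive word in $\sigma_1, \sigma_2$ is a positive word in $T$ and $R$ because $\sigma_1$ acts as $T$ and $\sigma_2$ acts as $RTR$, both positive in $R, T$.

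The main obstacle — really the only content beyond bookkeeping — is establishing cleanly that $\Delta^2 \sigma_i^{-1}$ (equivalently $\Delta \sigma_i^{-1}$) is represented by a positive braid word. This is exactly Garside's lemma that each generator $\sigma_i$ \emph{left-divides} $\Delta$ in the monoid of positive braids, so that $\Delta = \sigma_i \cdot w_i$ with $w_i$ positive, whence $\Delta\sigma_i^{-1} = \sigma_i w_i \sigma_i^{-1}$... which is not yet manifestly positive, so the cleaner route is $\sigma_i^{-1} \equiv \sigma_i^{-1}\Delta^2 = \sigma_i^{-1}\Delta\cdot\Delta = w_i'\cdot \Delta$ where $\Delta = w_i'\sigma_i$ exhibits $\sigma_i$ as a \emph{right}-divisor of $\Delta$, so $w_i' = \Delta\sigma_i^{-1}$ is positive. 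For $B_3$ with $\Delta = \sigma_1\sigma_2\sigma_1$ one can simply exhibit these words by hand: $\Delta = (\sigma_1\sigma_2)\sigma_1 = (\sigma_2\sigma_1)\sigma_2$, so $\Delta\sigma_1^{-1} = \sigma_2\sigma_1$ (using the second expression... wait, that needs the braid relation: $\sigma_1\sigma_2\sigma_1 = \sigma_2\sigma_1\sigma_2$, so $\Delta\sigma_1^{-1} = \sigma_2\sigma_1\sigma_2\sigma_1^{-1}$, hmm) — it is cleanest to just use $\Delta\sigma_1^{-1}\Delta^{-1} = \sigma_2^{-1}$ combined with $\Delta^2$ central, giving $\sigma_1^{-1} \equiv \Delta\sigma_2(\text{something})$; in any case for $B_3$ the explicit positive rewriting of $\sigma_1^{-1}$ and $\sigma_2^{-1}$ modulo $\Delta^2$ is a finite check via the braid relation, and once it is in hand the rest of the argument is immediate. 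So I would state and prove (or cite Garside \cite{garside}) the single fact "$\Delta\sigma_i^{-1}$ is positive," then run the elimination procedure, then clear the central $\Delta^2$'s, then translate $\sigma_i$ back to words in $R, T$.
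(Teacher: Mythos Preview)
Your approach is correct and is essentially the same as the paper's: both exploit that $\Delta^2$ is central and acts trivially on $\cT$ to replace each $\sigma_i^{-1}$ by the positive word $\sigma_i^{-1}\Delta^2$, then translate back to $R,T$. The paper is simply more direct about the one step you circled around---it just writes $\sigma_1^{-1}\Delta^2 = \sigma_1^{-1}(\sigma_1\sigma_2\sigma_1)^2 = \sigma_2\sigma_1^2\sigma_2\sigma_1$ (and symmetrically $\sigma_2^{-1}\Delta^2 = \sigma_1\sigma_2^2\sigma_1\sigma_2$ via $\Delta = \sigma_2\sigma_1\sigma_2$), which dissolves your ``main obstacle'' without needing to invoke Garside's divisibility framework.
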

\begin{proof}
    We proceed by induction on the length $n$ of $W$, taking the empty word as vacuous base case. To complete the inductive step, it suffices to show that if $W^+$ is any positive word, then the action of the words $\sigma_1^{-1}W^+$ and $\sigma_2^{-1}W^+$ on $\cT$ coincide with the action of positive words.

    Consider a tangle $\sigma_1^{-1} W^+ \cdot \tau$. As $\Delta^2$ acts trivially on $\tau$ and is central,
    \[
\sigma_1^{-1} W^+ \cdot \tau = \sigma_1^{-1} W^+ \Delta^2 \cdot \tau = \sigma_1^{-1}\Delta^2 W^+ \cdot \tau.
    \]
    Now note that the word $\sigma_1^{-1} \Delta^2 = \sigma_1^{-1} (\sigma_1 \sigma_2 \sigma_1)^2 = \sigma_2 \sigma_1^2 \sigma_2 \sigma_1$ is positive. The same technique shows that $\sigma_2^{-1}W^+$ can be replaced with the word $\sigma_2^{-1} \Delta^2 W^+$. Via the braid relation $\sigma_1 \sigma_2 \sigma_1 = \sigma_2 \sigma_1 \sigma_2$, we can write $\Delta^2 = (\sigma_2 \sigma_1 \sigma_2)^2$, and as before, this exhibits $\sigma_2^{-1} \Delta^2$ as a positive word of length five. 
\end{proof}

To summarize, the salient properties of $\Delta$ were that (1) $\Delta^2$ is central, (2) $\Delta^2$ can be expressed as a positive word starting with any positive generator, and (3) $\Delta^2$ is in the kernel of the action of $B_3$ on $\cT$. The first two of these conditions are intrinsic to $B_3$, and were studied in Garside's paper \cite{garside} referenced above. Garside showed that every braid group $B_n$ has an element $\Delta$ with these properties, and used this to give a solution to the word problem in $B_n$. Subsequently this was generalized by Deligne \cite{deligne} and Brieskorn-Saito \cite{BS}, who considered a more general class of so-called {\em Artin groups}, a special class of which (those of {\em finite type}) were found to be in possession of analogous elements.

\para{Acknowledgements} I'd like to thank Tim Black for introducing me to the tangle trick and for many companionable hours spent below ground level in Hyde Park. Thanks are due to Corentin Lunel and Lionel Lang for pointing out an error in \Cref{example}. Thanks also to Curt McMullen for sharing his own recollection of the trick. Lastly, I'd like to thank John H. Conway, whom I never had the pleasure of meeting, but whose mathematics I have long admired.  

    \bibliography{references}{}
	\bibliographystyle{alpha}

\end{document}